\documentclass[11pt]{amsart}
\usepackage{amsmath,amssymb,amsthm}
\usepackage{mathtools}
\usepackage{enumitem}
\usepackage{booktabs}
\usepackage{hyperref}

\hypersetup{hidelinks}

\newtheorem{theorem}{Theorem}[section]
\newtheorem{proposition}[theorem]{Proposition}
\newtheorem{lemma}[theorem]{Lemma}

\theoremstyle{remark}
\newtheorem{remark}[theorem]{Remark}
\newtheorem{conjecture}[theorem]{Conjecture}

\newcommand{\Q}{\mathbb{Q}}
\newcommand{\Z}{\mathbb{Z}}

\newcommand{\sq}{\mathord{\Box}}

\title[Elliptic curves induced by Fibonacci triples]{Ranks and integer points on elliptic curves induced by Fibonacci triples}
\author{Andrej Dujella}
\date{}

\begin{document}

\dedicatory{Dedicated to the memory of my dear friend and coauthor Florian Luca}

\begin{abstract}
Let $F_n$ and $L_n$ denote the Fibonacci and Lucas numbers, respectively, and consider
\[
 E_k:\qquad y^2=(F_{2k}x+1)(F_{2k+2}x+1)(F_{2k+4}x+1).
\]
These elliptic curves arise naturally from the regular Diophantine triples
\[
 \{F_{2k},F_{2k+2},F_{2k+4}\}.
\]
For odd $k$, we exhibit the rational point
\[
 Q_k=\left(
 -\frac{F_{k-1}}{L_kF_{k+1}F_{k+2}},
 \frac{F_{2k+1}}{L_kF_{k+1}F_{k+2}}
 \right).
\]
For every odd $k\geq 3$, this point is independent of the standard point
$P_k=(0,1)$; in particular, $\operatorname{rank}E_k(\Q)\geq 2$.
Moreover, if $k\geq 3$ is odd and $\operatorname{rank}E_k(\Q)=2$, then all
integer points on $E_k$ are exactly the points arising from the two known
solutions of the Hoggatt-Bergum extension problem.  By parametrizing the
two conics $L^2-5F^2=\pm4$ and applying an injective specialization
criterion, we also show that the corresponding one-parameter elliptic
families have generic ranks $2$ in the odd case and $1$ in the even case.
Finally, we discuss computational data and propose the heuristic rank
distribution $1/4,1/2,1/4$ for ranks $1,2,3$, respectively, with density
zero for rank at least $4$.
\end{abstract}

\maketitle

\section{Introduction}

A set of positive integers $\{a_1,\ldots,a_m\}$ is called a Diophantine
$m$-tuple if $a_i a_j+1$ is a perfect square for all $i\neq j$.
The Fibonacci numbers satisfy
\[
 F_{2k}F_{2k+2}+1=F_{2k+1}^2,\qquad
 F_{2k}F_{2k+4}+1=F_{2k+2}^2,
\]
and
\[
 F_{2k+2}F_{2k+4}+1=F_{2k+3}^2.
\]
Hence
\[
 \{F_{2k},F_{2k+2},F_{2k+4}\}
\]
is a Diophantine triple for every positive integer $k$.  Hoggatt and
Bergum~\cite{HoggattBergum} observed that it can be extended to the Diophantine quadruple
\[
 \{F_{2k},F_{2k+2},F_{2k+4},
 4F_{2k+1}F_{2k+2}F_{2k+3}\},
\]
and conjectured that the fourth element is unique.  This conjecture was
proved in \cite{DujellaHB}. A simple consequence of this result is that 
if $\{F_{2k}, F_{2k+2}, F_{2k+4}, d\}$ is a Diophantine quadruple, then $d$ cannot be a Fibonacci number 
(see \cite[Section 4.9.1]{DujellaBook}). 

Several authors considered the question of how large Diophantine tuples consisting of Fibonacci numbers can be.
He, Luca, and Togb\'e~\cite{HLT} proved that if $\{F_{2n}, F_{2n+2}, F_k\}$ is a Diophantine triple, 
then $k = 2n+4$ or $k = 2n - 2$ (when $n > 1$), except when $n = 2$, in which case $k = 1$ is also possible. 
Fujita and Luca~\cite{FL} proved that there are only finitely many Diophantine quadruples consisting of Fibonacci numbers, and in~\cite{FL2} they proved that there are no such quadruples.

To the triple $\{F_{2k},F_{2k+2},F_{2k+4}\}$ one associates the elliptic curve
\begin{equation}\label{Ek}
 E_k:\qquad
 y^2=(F_{2k}x+1)(F_{2k+2}x+1)(F_{2k+4}x+1).
\end{equation} 
We say that $E_k$ is induced by the triple $\{F_{2k},F_{2k+2},F_{2k+4}\}$.  
The curves $E_k$ were studied in \cite{DujellaMT} and, more recently, in
\cite[Sections 4.8-4.9]{DujellaBook}.  It was proved that
\[
 E_k(\Q)_{\rm tors}\simeq \Z/2\Z\times\Z/2\Z
\]
and that $\operatorname{rank}E_k(\Q)\geq 1$.  Under the assumption that 
$\operatorname{rank}E_k(\Q)=1$, all integer points were determined.
In particular, for $k\geq2$ they are
\begin{equation}\label{knownpoints}
 (0,\pm1)
\end{equation}
and
\begin{equation}\label{HBpoint}
 \left(
 d_k,\,
 \pm(2F_{2k+1}F_{2k+2}-1)
 (2F_{2k+2}^2+1)
 (2F_{2k+2}F_{2k+3}+1)
 \right),
\end{equation}
where
\[
 d_k=4F_{2k+1}F_{2k+2}F_{2k+3}.
\]

Computations with PARI/GP suggested an unusual rank distribution in this
family: among the curves of odd expected rank, curves of expected rank
$3$ occur almost as frequently as curves of expected rank $1$; see
\cite[Remark 4.9.5]{DujellaBook}.  The purpose of this note is to explain
this phenomenon.  The main new ingredient is a second systematic rational
point which appears precisely in the odd-index subfamily.

Our main result is the following.

\begin{theorem}\label{main}
Let $k\geq3$ be odd. Then:
\begin{enumerate}[label={\rm (\roman*)}]
\item the point
\[
 Q_k=\left(
 -\frac{F_{k-1}}{L_kF_{k+1}F_{k+2}},
 \frac{F_{2k+1}}{L_kF_{k+1}F_{k+2}}
 \right)
\]
belongs to $E_k(\Q)$;
\item the points $P_k=(0,1)$ and $Q_k$ are independent, and therefore
\[
 \operatorname{rank}E_k(\Q)\geq2;
\]
\item if $\operatorname{rank}E_k(\Q)=2$, then the integer points on
$E_k$ are precisely the points in \eqref{knownpoints} and
\eqref{HBpoint}.
\end{enumerate}
\end{theorem}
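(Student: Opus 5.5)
For part (i) we verify that $Q_k$ lies on $E_k$ directly. Write $x_0=-F_{k-1}/(L_kF_{k+1}F_{k+2})$. Each factor $F_{2j}x_0+1$, for $j\in\{k,k+1,k+2\}$, is a rational number with denominator $L_kF_{k+1}F_{k+2}$ and numerator $L_kF_{k+1}F_{k+2}-F_{2j}F_{k-1}$. We will simplify these numerators with the identities $F_{2j}=F_jL_j$ and $L_kF_{k+1}=F_{2k+1}+(-1)^k$, together with the d'Ocagne and Catalan identities. For odd $k$ we expect each numerator to factor as a product of two small Fibonacci/Lucas numbers. The product of the three numerators should then be a square times $L_kF_{k+1}F_{k+2}$, namely $F_{2k+1}^2(L_kF_{k+1}F_{k+2})^2$ divided by the appropriate power. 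A clean way to do the check is to substitute $F_n=(\alpha^n-\beta^n)/\sqrt5$ and $L_n=\alpha^n+\beta^n$ with $\alpha\beta=-1$ and $(-1)^k=-1$. This turns the claim into a Laurent-polynomial identity in $\alpha^k$, which can be checked mechanically. The odd-$k$ hypothesis enters exactly through the sign $(\alpha\beta)^k=-1$.

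For part (ii) we use the standard 2-descent homomorphism on a curve with full 2-torsion. Write $E_k$ as $y^2=(x-e_1)(x-e_2)(x-e_3)$, for instance after the scaling $x\mapsto x/(F_{2k}F_{2k+2}F_{2k+4})$. The map $\varphi\colon E_k(\Q)\to(\Q^*/\Q^{*2})^3$ sends a point $(x,y)$ to the square classes of the three factors $F_{2j}x+1$, up to fixed constants, with the usual adjustment at the 2-torsion points. Its kernel is $2E_k(\Q)$. Since the torsion is $\Z/2\Z\times\Z/2\Z$, the images of $T_1,T_2$ and of $P_k,Q_k$ span an $\mathbb{F}_2$-space. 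Independence of $P_k,Q_k$ modulo torsion follows once these four images are $\mathbb{F}_2$-independent in $(\Q^*/\Q^{*2})^3$. This is because a relation $mP_k+nQ_k\in E_k(\Q)_{\rm tors}$ with $(m,n)$ not both even could be reduced, by dividing out by $2$ (legitimate since $E_k(\Q)[2]$ is the full torsion and $2$-divisions stay rational up to torsion), to a relation mod $2E_k(\Q)$.

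The images themselves are easy to read off. For $P_k=(0,1)$ all three factors equal $1$. For $Q_k$ the factors are the numerators computed in (i) divided by $L_kF_{k+1}F_{k+2}$. For the 2-torsion points $(-1/F_{2i},0)$ the factors are $1-F_{2j}/F_{2i}$, simplified via $F_{2i}-F_{2j}$ identities, for example $F_{2k+2}-F_{2k}=F_{2k+1}$. One subtlety is that $P_k$ has trivial image in this naive normalization. So we instead use the genuinely integral model, where the constants $F_{2i}$ appear, and reduce to showing that certain products such as $F_{2k}F_{2k+2}$, $F_{k-1}F_{k+1}$ and $L_k$ are not squares and are not square multiples of each other. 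We will handle this with the classical results (Cohn) on squares and twice-squares in Fibonacci and Lucas sequences, plus coprimality of consecutive terms. We expect this to be the main obstacle: one must choose the right square classes and rule out all coincidences uniformly in odd $k\ge3$.

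An alternative route for (ii) is specialization. Treat $\alpha^k=t$ as a parameter on the conic $L^2-5F^2=-4$, show independence of $P,Q$ over $\Q(t)$ via a height/Néron–Tate pairing computation, and then apply an injective specialization criterion. However, this only gives independence for all but finitely many $k$, so the 2-descent argument is preferable.

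For part (iii) we follow the proof from \cite{DujellaMT} and \cite[Section 4.9]{DujellaBook} of the rank-one case. An integer point $(x,y)$ with $x\neq0$ forces $F_{2k}x+1$, $F_{2k+2}x+1$ and $F_{2k+4}x+1$ to be squares up to bounded square-free factors. If $\operatorname{rank}=2$, the group generated by $P_k,Q_k$ and torsion has index dividing a power of $2$ related to the descent above, but points in $2E_k(\Q)+$(torsion) have trivial descent image. Comparing the descent images of $T+mP_k+nQ_k$ with the possible square classes of integer points shows that any integer point has image equal to that of $\pm P_k$ or of the Hoggatt–Bergum point. Integer points of the first type make $F_{2j}x+1$ all squares, so $\{F_{2k},F_{2k+2},F_{2k+4},x\}$ is a Diophantine quadruple (or $x=0$). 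By \cite{DujellaHB} this gives $x\in\{0,d_k\}$. Negative $x$ are excluded since $F_{2k}x+1>0$ forces $x\ge0$ for an integer point with $y\ne0$. The remaining descent classes, those from $Q_k$ and its combinations, must be shown not to contain integer points. For these, the factor involving $L_kF_{k+1}F_{k+2}$ in the square class is incompatible with all three factors being positive integers whose product is a square of the required shape.
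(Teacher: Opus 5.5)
Your architecture matches the paper's: a direct check for (i); $2$-descent on $Y^2=(X+bc)(X+ac)(X+ab)$, with $a=F_{2k}$, $b=F_{2k+2}$, $c=F_{2k+4}$, plus Cohn's theorem for (ii); and the sixteen classes $\epsilon_1P_k+\epsilon_2Q_k+T$ modulo $2E_k(\Q)$ plus the Hoggatt--Bergum theorem for (iii). Part (i) is fine. The three numerators are $L_kF_{2k+1}$, $F_{k+1}F_{2k+1}$ and $F_{k+2}$, that is, $ax_0+1=F_{2k+1}/(F_{k+1}F_{k+2})$, $bx_0+1=F_{2k+1}/(L_kF_{k+2})$ and $cx_0+1=1/(L_kF_{k+1})$.

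Part (ii), however, stops exactly where the proof has to begin. You never compute the descent image of $Q_k$, and the quantities you propose to test ($F_{2k}F_{2k+2}$, $F_{k-1}F_{k+1}$, $L_k$) are not the relevant ones. The missing step is as follows. Using $a=F_kL_k$, $b=F_{k+1}L_{k+1}$ and $c=F_{k+2}L_{k+2}$, the images of $Q_k$ and $Q_k+C$ (with $C=(-ab,0)$) carry the factor $L_{k+2}$ in their first two coordinates. The images of $Q_k+P_k$ and $Q_k+P_k+C$ carry $F_{k+2}$ there instead. Every odd prime dividing $F_{k+2}$ (resp.\ $L_{k+2}$) is coprime to all the other factors that occur: $F_{2k+1}$, $F_k$, $F_{k+1}$, $L_k$, $L_{k+1}$, $c-a=L_{2k+2}$ and $c-b=F_{2k+3}$. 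This follows from a short computation modulo $p$ using Cassini's identity. Hence, for $T\in\{O,C\}$, $Q_k+T\in2E_k(\Q)$ would force $L_{k+2}$ to be a square, and $Q_k+P_k+T\in2E_k(\Q)$ would force $F_{k+2}$ to be a square or twice a square. Here the prime $2$ is controlled by $v_2(F_n)\le1$ and $v_2(L_n)\in\{0,2\}$ for odd $n$. Both are excluded by Cohn's theorem, since $k+2\ge5$ is odd. For $T=A,B$ the images have two negative coordinates, and $P_k+T\notin2E_k(\Q)$ is \cite[Lemma 2]{DujellaMT}. Without this computation, (ii) is only a strategy.

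In (iii), the step meant to eliminate the eight new classes does not work as written. The product of the three coordinates of any descent image is automatically a square (it is $y^2$), so no condition on ``the shape of the product'' can give a contradiction. Moreover, for an integer point in the class of $Q_k+P_k$ one has, modulo squares, $F_{2k}x+1\equiv F_{2k+1}L_{k+1}L_{k+2}$, $F_{2k+2}x+1\equiv F_{2k+1}F_kL_{k+2}$ and $F_{2k+4}x+1\equiv F_kL_{k+1}$. None of $L_k,F_{k+1},F_{k+2}$ occurs there.

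What is needed is a divisibility argument. For $T\in\{O,C\}$, in the classes $Q_k+T$ (resp.\ $Q_k+P_k+T$), both $F_{2k}x+1$ and $F_{2k+2}x+1$ carry the factor $F_{k+2}$ (resp.\ $L_{k+2}$). The roles are swapped relative to (ii) because $(F_{2k}x+1,F_{2k+2}x+1,F_{2k+4}x+1)$ represents the image of $R+P_k$. An odd prime occurring to an odd power in that Fibonacci or Lucas number then divides both integers, hence divides $F_{2k+2}-F_{2k}=F_{2k+1}$, contradicting the coprimality above; Cohn's theorem then finishes as in (ii). For $T=A,B$ the sign pattern is incompatible with $x\ge0$, and $x\ge0$ holds for every integer point because for $x\le-1$ all three factors are negative.

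Two statements also need correcting. The index of $\langle P_k,Q_k\rangle+E_k(\Q)_{\rm tors}$ in $E_k(\Q)$ need not divide a power of $2$; by (ii) it is odd. The torsion points $A,B,C$ do not have trivial descent image. What you actually use is that, by (ii) and the rank-$2$ hypothesis, the sixteen points $\epsilon_1P_k+\epsilon_2Q_k+T$ represent all of $E_k(\Q)/2E_k(\Q)$. The eight classes not involving $Q_k$ are then disposed of, except $P_k+2E_k(\Q)$, by the class-by-class eliminations of \cite[Theorems 4 and 6]{DujellaMT}.
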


For $k=1$ the same formula gives $Q_1=(0,1)=P_1$, so the restriction
$k\geq3$ in parts (ii) and (iii) is necessary.

\section{The additional point}

We use the standard notation
\[
 F_0=0,\quad F_1=1,\quad F_{n+2}=F_{n+1}+F_n,
\]
and
\[
 L_0=2,\quad L_1=1,\quad L_{n+2}=L_{n+1}+L_n.
\]
We shall repeatedly use
\[
 L_n=F_{n-1}+F_{n+1},\qquad F_{2n}=F_nL_n,
\]
\[
 F_{2n+1}=F_n^2+F_{n+1}^2,
\]
and Cassini's identity
\[
 F_{n+1}F_{n-1}-F_n^2=(-1)^n.
\]

For odd $k$, put
\[
 f=F_k,\qquad u=F_{k+1},\qquad v=F_{k+2},
\]
\[
 \ell=L_k,\qquad \ell_1=L_{k+1},\qquad \ell_2=L_{k+2},
 \qquad r=F_{2k+1}.
\]
We shall also write
\[
 a=F_{2k},\qquad b=F_{2k+2},\qquad c=F_{2k+4}.
\]
Notice that
\begin{equation}\label{regular}
 b-a=r,\qquad c=a+b+2r,
\end{equation}
so the Fibonacci triple is a regular Diophantine triple (see e.g. \cite[Section 1.1]{DujellaBook}).

The following three identities will be useful:
\begin{align}
 uv-fF_{k-1}&=r,\label{id1}\\
 \ell v-\ell_1F_{k-1}&=r,\label{id2}\\
 \ell u-\ell_2F_{k-1}&=1.\label{id3}
\end{align}
They follow immediately from the standard addition formulas for Fibonacci
and Lucas numbers (or directly from Cassini's identity and the recurrence).

\begin{proposition}\label{Qpoint}
Let $k$ be odd and let
\[
 D_k=L_kF_{k+1}F_{k+2}.
\]
Then
\[
 Q_k=\left(-\frac{F_{k-1}}{D_k},
           \frac{F_{2k+1}}{D_k}\right)
\]
is a rational point on $E_k$.
\end{proposition}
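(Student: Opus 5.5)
The plan is to verify the curve equation factor by factor, using the factorizations $F_{2n}=F_nL_n$ to pull a common factor out of each linear term. Write $x_0=-F_{k-1}/D_k$ and $D_k=\ell uv$ in the notation above. The three coefficients factor as
\[
 a=F_{2k}=f\ell,\qquad b=F_{2k+2}=u\ell_1,\qquad c=F_{2k+4}=v\ell_2 .
\]
Each factor $ax_0+1$, $bx_0+1$, $cx_0+1$ then has numerator $D_k$ minus a product. In each case that numerator shares one of the factors $\ell,u,v$ with $D_k$, and what remains is exactly the left-hand side of one of the identities \eqref{id1}--\eqref{id3}.

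Concretely, I expect to obtain:
\begin{align*}
 ax_0+1&=\frac{\ell uv-f\ell F_{k-1}}{D_k}=\frac{\ell\,(uv-fF_{k-1})}{D_k}=\frac{\ell r}{D_k},\\
 bx_0+1&=\frac{\ell uv-u\ell_1F_{k-1}}{D_k}=\frac{u\,(\ell v-\ell_1F_{k-1})}{D_k}=\frac{ur}{D_k},\\
 cx_0+1&=\frac{\ell uv-v\ell_2F_{k-1}}{D_k}=\frac{v\,(\ell u-\ell_2F_{k-1})}{D_k}=\frac{v}{D_k}.
\end{align*}
These use \eqref{id1}, \eqref{id2} and \eqref{id3}, respectively. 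Multiplying, the right-hand side of \eqref{Ek} at $x_0$ equals
\[
 \frac{\ell u v\, r^2}{D_k^3}=\frac{r^2}{D_k^2}=\Bigl(\frac{F_{2k+1}}{D_k}\Bigr)^2 ,
\]
which is the square of the claimed $y$-coordinate. This shows $Q_k\in E_k(\Q)$. Note also that $D_k\neq 0$, since $L_k,F_{k+1},F_{k+2}\geq1$.

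The only point needing care is the validity of \eqref{id1}--\eqref{id3}, which the paper takes as given; this is where the oddness of $k$ enters. If I had to check them, I would use the addition formula $F_{m+n}=F_mF_{n+1}+F_{m-1}F_n$ for \eqref{id1}, since $uv-fF_{k-1}$ is its instance with $m=n=k+1$ after a Cassini-type shift. For \eqref{id2} and \eqref{id3} I would use $L_n=F_{n-1}+F_{n+1}$ to reduce to products of Fibonacci numbers, and then apply Cassini's identity $F_{k+1}F_{k-1}-F_k^2=(-1)^k=-1$. The sign $(-1)^k$ is what forces the right-hand side of \eqref{id3} to be exactly $1$ rather than $-1$, so this sign bookkeeping is the one step I would double-check, for instance numerically at $k=3$: there $\ell u-\ell_2F_{k-1}=4\cdot3-11\cdot1=1$.
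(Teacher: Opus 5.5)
Your proposal is correct and is essentially the paper's proof: the paper also writes $a=f\ell$, $b=u\ell_1$, $c=v\ell_2$ and uses \eqref{id1}--\eqref{id3} to get $aq+1=r/(uv)$, $bq+1=r/(v\ell)$, $cq+1=1/(u\ell)$, which are exactly your $\ell r/D_k$, $ur/D_k$, $v/D_k$ after cancelling against $D_k=\ell uv$. Your parity bookkeeping is also right: \eqref{id1} and \eqref{id2} hold for every $k$ (indeed $uv-fF_{k-1}=F_{k+1}^2+F_k^2$ directly from the recurrence, so no Cassini step is needed there), whereas $\ell u-\ell_2F_{k-1}=(-1)^{k+1}$, so oddness enters only through \eqref{id3}.
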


\begin{proof}
Put $q=-F_{k-1}/D_k$.  Since $a=f\ell$, $b=u\ell_1$ and
$c=v\ell_2$, identities \eqref{id1}-\eqref{id3} give
\begin{align}
 aq+1&=\frac{r}{uv},\label{Q1}\\
 bq+1&=\frac{r}{v\ell},\label{Q2}\\
 cq+1&=\frac{1}{u\ell}.\label{Q3}
\end{align}
Therefore
\[
 (aq+1)(bq+1)(cq+1)
 =\frac{r^2}{\ell^2u^2v^2}
 =\left(\frac{r}{D_k}\right)^2,
\]
which proves the assertion.
\end{proof}

For example,
\[
 Q_3=\left(-\frac1{60},\frac{13}{60}\right),\qquad
 Q_5=\left(-\frac3{1144},\frac{89}{1144}\right).
\]

\section{Independence of the two points}

We follow the $2$-descent approach used in
\cite{DujellaMT,DujellaPetho,DujellaParam}. 
Applying $X = abcx$ and $Y = abcy$ transforms \eqref{Ek} into
\begin{equation}\label{E0}
 E_k':\qquad
 Y^2=(X+bc)(X+ac)(X+ab). 
\end{equation}
The three non-trivial points of order $2$ are
\[
 A=(-bc,0),\qquad B=(-ac,0),\qquad C=(-ab,0),
\]
and $P_k=(0,1)$ corresponds to
\[
 P=(0,abc).
\]
We use the same letters for a point on $E_k$ and its image on $E_k'$ when
no confusion can arise.

For $R=(X,Y)\in E_k'(\Q)$, the usual $2$-descent maps are represented,
away from the corresponding $2$-torsion point, by the square classes of
\[
 X+bc,\qquad X+ac,\qquad X+ab.
\]
In particular, $R\in2E_k'(\Q)$ if and only if all three corresponding
classes are trivial, with the standard special definitions at the
$2$-torsion points; cf.\ \cite{DujellaMT}.

We need two elementary coprimality observations.

\begin{lemma}\label{coprime}
Let $k\geq3$ be odd.
\begin{enumerate}[label={\rm (\roman*)}]
\item If an odd prime $p$ divides $v=F_{k+2}$, then
\[
 p\nmid rfu\ell\ell_1\ell_2(c-a)(c-b).
\]
\item If an odd prime $p$ divides $\ell_2=L_{k+2}$, then
\[
 p\nmid rfuv\ell\ell_1(c-a)(c-b).
\]
\end{enumerate}
\end{lemma}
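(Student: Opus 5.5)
The plan is to reduce every divisibility claim to a gcd of two Fibonacci or Lucas numbers. Then the standard facts
\[
\gcd(F_m,F_n)=F_{\gcd(m,n)},\qquad \gcd(F_m,L_n)\in\{1,2,L_{\gcd(m,n)}\},
\]
finish it. The second fact has a precise form: $\gcd(F_m,L_n)=L_d$ with $d=\gcd(m,n)$ exactly when $v_2(m)>v_2(n)$, and otherwise it is $1$ or $2$. We also need $\gcd(L_m,L_n)\in\{1,2,L_d\}$, with $L_d$ occurring exactly when $v_2(m)=v_2(n)$. Since $k$ is odd, $k+2$ is odd, and every index we compare with $k+2$ lies in $\{k-1,k,k+1,2k+1,\dots\}$. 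Its gcd with $k+2$ therefore divides a small number.

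We first rewrite the differences. By \eqref{regular}, $c-b=a+2r=F_{2k}+2F_{2k+1}$. Directly, $c-b=F_{2k+4}-F_{2k+2}=F_{2k+3}$, and $c-a=F_{2k+4}-F_{2k}=L_{2k+2}$, using $F_{n+2}-F_{n-2}=L_n$. The lists of factors to test are then the following.

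For (i), take $p\mid F_{k+2}$ odd. Then $\gcd(k+2,2k+1)\mid 3$, $\gcd(k+2,k)\mid 2$, and $\gcd(k+2,k+1)=1$, so $p$ can divide at most $F_1$, $F_2$ or $F_3=2$. This rules out $p\mid rfu$. For $F_{2k+3}$ we get $\gcd(k+2,2k+3)=\gcd(k+2,1)=1$. Lucas numbers against $F_{k+2}$ with $k+2$ odd have gcd $1$ or $2$, since $v_2(k+2)=0$ is never larger than $v_2(n)$. This covers $\ell,\ell_1,\ell_2$ and $L_{2k+2}$.

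For (ii), take $p\mid L_{k+2}$ odd. Against Fibonacci numbers $F_m$ with $m\in\{2k+1,k,k+1,k+2,2k+3\}$ we need $v_2(m)>v_2(k+2)=0$, so $m$ must be even. The only even candidate is $m=k+1$, and there $\gcd(k+1,k+2)=1$ gives $L_1=1$. Against Lucas numbers $L_n$ we need $v_2(n)=0$, so $n$ is odd. The odd candidates are $n=k$, with $\gcd=\gcd(k,2)=1$ giving $L_1=1$, and $n=k+1$ is even. Finally $L_{2k+2}$ has $v_2\ge1$, so it is also excluded. In every case the gcd is $1$ or $2$, so no odd prime divides both.

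The main obstacle is only bookkeeping. We must state the gcd facts for mixed Fibonacci/Lucas pairs correctly, and also check $\gcd(k+2,2k+1)\mid 3$. If $3\mid k+2$, then $F_3=2$ is still harmless. It is cleaner to argue as follows: if $p$ divides both $F_{k+2}$ and $F_{2k+1}$, then $p\mid F_{\gcd}$, which divides $F_3=2$. Each $p\mid\gcd(F_m,F_n)$ gives $p\mid F_{\gcd(m,n)}$, and the relevant gcds are $1$, $2$ or $3$, with $F_1=F_2=1$ and $F_3=2$. Oddness of $p$ then finishes every case.
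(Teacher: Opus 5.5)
Your argument is correct, but it takes a different route from the paper.

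\emph{The paper's route.} It uses no gcd theorems. From $p\mid v=f+u$ (resp.\ $p\mid\ell_2=f+3u$) it gets $u\equiv-f$ (resp.\ $f\equiv-3u$) modulo $p$. Cassini's identity $u^2-fu-f^2=-1$, which is where oddness of $k$ enters, then gives $f^2\equiv-1$ (resp.\ $5u^2\equiv1$). Every remaining factor reduces to a small integer times a unit: for instance $r\equiv\mp2$, $c-a\equiv\mp3$, $c-b\equiv\mp1$, and $\ell\equiv-3f$ or $5u$.

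\emph{Your route.} You reduce everything to $\gcd(F_m,F_n)=F_{\gcd(m,n)}$ and the $2$-adic criteria for $\gcd(F_m,L_n)$ and $\gcd(L_m,L_n)$. Oddness of $k$ enters instead through $v_2(k+2)=0$ and the parities of the other indices. Your index checks are all right, including $\gcd(k+2,2k+1)\mid3$, $\gcd(k+2,2k+3)=1$, and the single even Fibonacci index $k+1$ in (ii), which is coprime to $k+2$.

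\emph{What each buys.} The paper's proof is self-contained, needing only Cassini and the recurrence. Yours depends on the mixed Fibonacci--Lucas gcd theorem (McDaniel), which is less standard than the Fibonacci one and should be cited rather than called standard. In return your bookkeeping is uniform. The paper's residues $-3f$, $5u$, $f\equiv-3u$ and $\pm3$ tacitly require $p\neq3$ in (i) and $p\neq3,5$ in (ii). Those primes are excluded only because $f^2\equiv-1$ and $5u^2\equiv1$ have no solutions modulo them, a point the paper leaves unstated. In your argument every relevant gcd is visibly $1$ or $2$, so oddness of $p$ finishes all cases at once. Your method would also handle any other finite list of indices mechanically.
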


\begin{proof}
Assume first that $p\mid v$.  Since $v=f+u$, we have
$u\equiv-f\pmod p$.  Cassini's identity, in the form
\[
 u^2-fu-f^2=-1,
\]
gives $f^2\equiv-1\pmod p$.  Consequently
\[
 r=f^2+u^2\equiv-2,\qquad
 \ell=2u-f\equiv-3f,
\]
\[
 \ell_1=f+v\equiv f,\qquad
 \ell_2=2u+v\equiv-2f\pmod p.
\]
Furthermore,
\[
 c-a=L_{2k+2}=\ell_1^2-2\equiv-3\pmod p
\]
and
\[
 c-b=F_{2k+3}=u^2+v^2\equiv-1\pmod p.
\]
This proves (i).

Now assume $p\mid\ell_2$.  Since $\ell_2=f+3u$, we have
$f\equiv-3u\pmod p$.  Cassini's identity gives
$5u^2\equiv1\pmod p$.  Hence
\[
 r=f^2+u^2\equiv2,\qquad
 v=f+u\equiv-2u,
\]
\[
 \ell=2u-f\equiv5u,\qquad
 \ell_1=2f+u\equiv-5u\pmod p.
\]
Also
\[
 c-a=L_{2k+2}=\ell_1^2-2\equiv3\pmod p,
\]
and
\[
 c-b=F_{2k+3}=u^2+v^2\equiv1\pmod p.
\]
This proves (ii).
\end{proof}

We shall use the classical theorem of Cohn \cite{Cohn}: the only square
Fibonacci numbers are $F_1=F_2=1$ and $F_{12}=144$, while the only
Fibonacci numbers which are twice a square are $F_3=2$ and $F_6=8$.
Similarly, the only square Lucas numbers are $L_1=1$ and $L_3=4$.

We shall also use the elementary parity facts
\begin{equation}\label{2adic}
 v_2(F_n)=
 \begin{cases}
 0,&3\nmid n,\\
 1,&3\mid n,
 \end{cases}
 \qquad
 v_2(L_n)=
 \begin{cases}
 0,&3\nmid n,\\
 2,&3\mid n,
 \end{cases}
 \qquad (n\ {\rm odd}),
\end{equation}
which follow immediately from the Fibonacci and Lucas recurrences modulo
$4$ and $8$, respectively.  Thus, for odd $n$, if every odd prime occurs
to even exponent in $F_n$ (respectively $L_n$), then $F_n$ is a square
or twice a square (respectively $L_n$ is a square).

The following table is obtained by applying the three descent
homomorphisms to $Q$, $Q+P$, $Q+C$ and $Q+P+C$.  Here $\sq$ denotes
the square of a rational number.
\begin{equation}\label{table}
\begin{array}{c|ccc}
R& ax+1&bx+1&cx+1\\ \hline
Q&
ruv\,\sq&
rv\ell\,\sq&
u\ell\,\sq\\[1mm]
Q+P&
r\ell_1\ell_2\,\sq&
rf\ell_2\,\sq&
f\ell_1\,\sq\\[1mm]
Q+C&
rv\ell_1(c-a)\,\sq&
rfv(c-b)\,\sq&
f\ell_1(c-a)(c-b)\,\sq\\[1mm]
Q+P+C&
ru\ell_2(c-a)\,\sq&
r\ell\ell_2(c-b)\,\sq&
u\ell(c-a)(c-b)\,\sq .
\end{array}
\end{equation}
For the four points
\[
 Q+A,\quad Q+B,\quad Q+P+A,\quad Q+P+B,
\]
exactly two of the three corresponding coefficients are negative.

\begin{lemma}\label{newclasses}
Let $k\geq3$ be odd and let
$T\in E_k'(\Q)_{\rm tors}=\{O,A,B,C\}$. Then
\[
 Q+T\notin2E_k'(\Q),\qquad Q+P+T\notin2E_k'(\Q).
\]
\end{lemma}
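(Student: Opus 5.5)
We work with the descent table \eqref{table} and the sign remark after it. A point lies in $2E_k'(\Q)$ only if all three descent images are squares, and in particular positive. For the four points $Q+A$, $Q+B$, $Q+P+A$, $Q+P+B$, two of the three coefficients are negative, so none of them is in $2E_k'(\Q)$. It remains to treat $Q$, $Q+P$, $Q+C$ and $Q+P+C$, that is, $Q+T$ and $Q+P+T$ with $T\in\{O,C\}$. For each of these we show that one of the three square classes in the table is nontrivial. The method is to find an odd prime occurring to an odd exponent in the coefficient, using Lemma~\ref{coprime} to isolate the contribution of a single Fibonacci or Lucas factor.

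\textbf{The rows involving $v$ ($Q$ and $Q+C$).} Look at the $ax+1$ column. For $Q$ the coefficient is $ruv$, and for $Q+C$ it is $rv\ell_1(c-a)$. By Lemma~\ref{coprime}(i), no odd prime dividing $v=F_{k+2}$ divides any of the other factors. If every such prime occurred to even exponent in the coefficient, it would occur to even exponent in $v$. Since $k+2$ is odd, \eqref{2adic} then forces $F_{k+2}$ to be a square or twice a square. Cohn's theorem allows only $k+2\in\{1,3\}$ among odd indices, and both are excluded because $k\ge 3$ gives $k+2\ge5$. So some odd prime appears to an odd exponent, and the class is nontrivial.

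\textbf{The rows involving $\ell_2$ ($Q+P$ and $Q+P+C$).} Use the $ax+1$ column again, with coefficients $r\ell_1\ell_2$ and $ru\ell_2(c-a)$. By Lemma~\ref{coprime}(ii), the odd primes dividing $\ell_2=L_{k+2}$ divide none of $r,u,\ell_1,c-a$. If the class were trivial, every odd prime would have even exponent in $L_{k+2}$. With $k+2$ odd, \eqref{2adic} then makes $L_{k+2}$ a square, so $k+2\in\{1,3\}$, which is again impossible.

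\textbf{Main obstacle.} The delicate point is making the implication ``trivial square class $\Rightarrow$ even exponents in $v$ (respectively $\ell_2$)'' rigorous. Two things are needed here.

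First, the table entries must hold exactly as square classes: each coefficient multiplied by a rational square. For these rows the entries have no unexpected sign, so no sign issue arises beyond what was used above.

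Second, the coprimality must be applied carefully. Lemma~\ref{coprime} covers odd primes only, so the prime $2$ is handled separately through \eqref{2adic}. That is exactly why we reduce to ``square or twice a square'' for $F_n$ and ``square'' for $L_n$, rather than just ``square''. With these two points in place, Cohn's classification finishes each case.
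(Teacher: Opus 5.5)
Your proof is correct and follows the paper's own argument. The sign remark disposes of $T=A,B$; for $T=O,C$, Lemma~\ref{coprime} isolates an odd prime occurring to odd exponent in $F_{k+2}$ or $L_{k+2}$, and then \eqref{2adic} and Cohn's theorem finish.

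One small bookkeeping slip. The table lists $ax+1,bx+1,cx+1$, while the descent map on $E_k'$ uses $X+bc=bc(ax+1)$, and similarly for the other two factors. So the descent classes of $R$ are actually the row of $R+P$, as the paper's proof notes, not the row of $R$. Your conclusion is unaffected for three reasons: you show all four rows are nontrivial, the set $\{Q,Q+P,Q+C,Q+P+C\}$ is stable under adding $P$, and the factors $bc,ac,ab>0$ do not change any signs.
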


\begin{proof}
The descent classes of a point $R$ are represented by the coefficients
in the row corresponding to $R+P$ in \eqref{table}, because the descent classes of
$P$ are represented by $bc,ac,ab$.

For $T=A$ or $B$, the assertion follows immediately from the two
negative coefficients mentioned above.

Suppose $T=O$ or $C$.  If $Q+T\in2E_k'(\Q)$, the corresponding row
$Q+P+T$ shows, by Lemma \ref{coprime}(ii), that every odd prime occurs
to an even exponent in $L_{k+2}$.  By \eqref{2adic}, $L_{k+2}$ is a
square, contrary to Cohn's theorem, since $k+2\geq5$ is odd.

Similarly, if $Q+P+T\in2E_k'(\Q)$, the row belonging to $Q+T$ and
Lemma \ref{coprime}(i) imply
\[
 F_{k+2}=\sq\qquad\text{or}\qquad F_{k+2}=2\sq,
\]
again impossible.
\end{proof}

\begin{proposition}\label{independent}
For every odd $k\geq3$, the points $P_k$ and $Q_k$ are independent in
$E_k(\Q)$.
\end{proposition}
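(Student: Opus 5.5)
We argue by contradiction and work on $E_k'$, where independence of $P_k$ and $Q_k$ is equivalent to that of $P$ and $Q$. Suppose there are integers $m,n$, not both zero, with $mP+nQ\in E_k'(\Q)_{\rm tors}=\{O,A,B,C\}$. The plan is to reduce modulo $2E_k'(\Q)$ and use Lemma \ref{newclasses} together with what is already known about $P$.

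First we normalize. Choose $(m,n)$ with $\gcd(m,n)$ minimal among all nontrivial relations. If $m$ and $n$ are both even, write $m=2m'$ and $n=2n'$. Then $2(m'P+n'Q)=T$ with $T$ torsion, so $m'P+n'Q$ lies in $E_k'(\Q)_{\rm tors}$, because the torsion subgroup is $\Z/2\Z\times\Z/2\Z$ and any point $R$ with $2R$ torsion is itself torsion (it has finite order). This gives a smaller relation, contradicting minimality. Hence at least one of $m,n$ is odd.

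Now reduce the relation modulo $2E_k'(\Q)$. There are three cases.
\begin{enumerate}[label={\rm (\alph*)}]
\item If $m$ is even and $n$ is odd, then $Q+T'\in 2E_k'(\Q)$ for some torsion point $T'$. Indeed, $mP+(n-1)Q\in 2E_k'(\Q)$, so $Q\equiv T \pmod{2E_k'(\Q)}$, and since $-T=T$ we get $Q+T\in 2E_k'(\Q)$. This contradicts Lemma \ref{newclasses}.
\item If $m$ and $n$ are both odd, then in the same way $Q+P+T\in 2E_k'(\Q)$, again contradicting Lemma \ref{newclasses}.
\item If $m$ is odd and $n$ is even, then $P+T\in 2E_k'(\Q)$ for some torsion point $T$.
\end{enumerate}

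Case (c) is the step needing an extra ingredient, because Lemma \ref{newclasses} says nothing about $P$ alone. Here I would invoke the known result from \cite{DujellaMT} (also \cite[Section 4.9]{DujellaBook}) that $P$ has infinite order and that $P+T\notin 2E_k'(\Q)$ for every torsion $T$. This is the statement used there to prove $\operatorname{rank}E_k(\Q)\ge1$ and to show that $P$ can serve as a generator modulo torsion and doubles.

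If a self-contained argument is preferred, it can be done with the same descent. The classes of $P+T$ are represented by $bc$, $ac$, $ab$ for $T=O$, and by the analogous products with $(c-a)$, $(c-b)$, $(a-b)$ factors for $T=A,B,C$. For $T=A$ or $B$ a negative coordinate rules the point out. For $T=O$ or $C$, triviality of the $ab$-class would force $F_{2k}F_{2k+2}=fu\ell\ell_1$ to be a square. Using the pairwise coprimality of $f,u$ and of the Lucas factors (up to powers of $2$) together with \eqref{2adic} and Cohn's theorem, this forces a Fibonacci or Lucas number of index $\ge3$ to be a square or twice a square, which is impossible for $k\ge3$ odd.

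Since every case yields a contradiction, no nontrivial relation exists. Therefore $P_k$ and $Q_k$ are independent, and with the torsion subgroup $\Z/2\Z\times\Z/2\Z$ this gives $\operatorname{rank}E_k(\Q)\ge2$.
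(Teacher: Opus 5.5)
Your argument is essentially the paper's: you reduce a relation $mP+nQ\in E_k'(\Q)_{\rm tors}$ modulo $2E_k'(\Q)$, exclude the cosets of $Q$ and $P+Q$ by Lemma~\ref{newclasses} and those of $P$ via \cite{DujellaMT}, and remove common factors of $2$ (your minimal-$\gcd$ normalization is equivalent to the paper's repeated halving). The one step to tighten is case (c) with $T=C$. First, the statement you quote from \cite{DujellaMT} holds, as \cite[Lemma 2]{DujellaMT}, only outside an exceptional case ($c$, $c-a$, $c-b$ all twice squares), which the paper excludes by applying Cohn's theorem to $c=F_{2k+4}$. Second, in your self-contained variant the classes of $P+C$ are $c(c-a)$, $c(c-b)$, $(c-a)(c-b)$, so triviality does not force $ab$ to be a square. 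Argue instead that $c(c-b)=F_{2k+4}F_{2k+3}$ has coprime factors, so it would force $F_{2k+4}$ to be a square, which is impossible for odd $k$. Similarly, for $T=O$ use $\gcd(F_{2k},F_{2k+2})=1$ directly, since your looser claim about ``index $\ge3$'' fails for $F_3=2$ and $L_3=4$.
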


\begin{proof}
It was proved in \cite{DujellaMT} that
\[
 E_k(\Q)_{\rm tors}\simeq\Z/2\Z\times\Z/2\Z.
\]
Moreover, \cite[Lemma 2]{DujellaMT} shows that
\[
 P+T\notin2E_k'(\Q)
\]
for every torsion point $T$, except possibly in the exceptional
case in which $c$, $c-a$ and $c-b$ are all twice squares.
Here $c=F_{2k+4}$, and Cohn's theorem excludes this possibility for
$k\geq3$.

Together with Lemma \ref{newclasses}, this shows that no non-zero element
among
\[
 P+T,\qquad Q+T,\qquad P+Q+T,\qquad T\in E_k'(\Q)_{\rm tors},
\]
belongs to $2E_k'(\Q)$.

Suppose that $mP+nQ$ is torsion.  If $m$ and $n$ are not both even,
reduction modulo $2E_k'(\Q)$ contradicts the preceding paragraph.
Hence $m$ and $n$ are both even, say $m=2m_1$ and $n=2n_1$.
Since $2(m_1P+n_1Q)$ is torsion, the point $m_1P+n_1Q$ is torsion as
well.  Repeating the same argument shows that $m$ and $n$ are divisible
by arbitrarily high powers of $2$.  Therefore $m=n=0$.
\end{proof}

Together with Proposition \ref{Qpoint},
this proves parts (i) and (ii) of Theorem \ref{main}.

\section{Integer points when the rank is two}

We now assume throughout this section that $k\geq3$ is odd and
\[
 \operatorname{rank}E_k(\Q)=2.
\]
The proof of Proposition \ref{independent} shows that the images of $P$ and $Q$ 
are linearly independent in $E_k'(\Q)/(2E_k'(\Q) + E_k'(\Q)_{\rm tors}$. 
Since $\operatorname{rank}E'_k(\Q)= 2$, these images form a basis. Therefore, every point of
$E_k'$ is congruent modulo $2E_k'(\Q)$ to one of the sixteen points
\[
 T,\quad P+T,\quad Q+T,\quad P+Q+T,
 \qquad T\in\{O,A,B,C\}.
\]
This is exactly the same enlargement from eight to sixteen
$2$-descent classes which occurs in the rank-$2$ arguments of
\cite{DujellaPetho,DujellaParam}.

For the eight classes not involving $Q$, the class-by-class
elimination in the proof of \cite[Theorem 4]{DujellaMT}, specialized
to the Fibonacci family as in \cite[Theorem 6]{DujellaMT}, shows that
no integral point can occur outside the class $P+2E_k'(\Q)$.
The possible square or twice-square exceptional Fibonacci terms are
handled there by Cohn's theorem (with the remaining case $k=4$
eliminated modulo $3$); in particular, no exception occurs for odd
$k\geq3$.

Only the local eliminations attached to the eight classes $T$ and $P + T$ are being
reused from the proof of \cite[Theorem 4]{DujellaMT}; those arguments depend on the square-class
equations for the given coset, not on the global rank-one hypothesis used there to
exhaust all cosets.  
It remains to eliminate the eight new classes involving $Q$. 

\begin{lemma}\label{nointegerQ}
None of the eight classes
\[
 Q+T,\qquad Q+P+T,\qquad T\in\{O,A,B,C\},
\]
contains an integer point on $E_k$.
\end{lemma}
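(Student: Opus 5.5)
We work directly with the square-class conditions read off from table \eqref{table}. Suppose $R=(x,y)$ is an integer point on $E_k$ lying in one of the eight classes. If $x$ is an integer, then each of $ax+1$, $bx+1$, $cx+1$ is an integer. The pairwise gcds of these values divide $b-a=r$, $c-a$ and $c-b$ respectively, since for example $b(ax+1)-a(bx+1)=b-a$. Hence we may write
\[
 ax+1=\alpha_1\,\sq,\qquad bx+1=\alpha_2\,\sq,\qquad cx+1=\alpha_3\,\sq,
\]
with integer squares and squarefree $\alpha_i$, where $\alpha_i$ is the squarefree kernel of the $i$-th coefficient in the row of \eqref{table} corresponding to the class of $R$. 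The first step is to dispose of the four classes $Q+A$, $Q+B$, $Q+P+A$, $Q+P+B$. In each of them exactly two of the three $\alpha_i$ are negative, so $x<-1/c$ and $x>-1/a$ would both be required, or some similar incompatible pair of sign conditions. Since $a<b<c$ are positive, the signs of $ax+1$, $bx+1$, $cx+1$ for real $x$ follow the pattern $(+,+,+)$, $(+,+,-)$, $(+,-,-)$ or $(-,-,-)$. The pattern with the negatives in the wrong places, which is forced by the table row, cannot occur. I expect the sign patterns in these rows to be exactly $(-,+,-)$ or $(-,-,+)$ up to ordering; I would verify this directly from the explicit coefficients.

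For the remaining four classes $Q$, $Q+P$, $Q+C$, $Q+P+C$, I would use the prime divisors isolated in Lemma \ref{coprime}. Take an odd prime $p$ dividing $v=F_{k+2}$ to an odd power; such a prime exists by Cohn's theorem and \eqref{2adic}, since $k+2\ge5$ is odd. For the classes $Q$ and $Q+C$, Lemma \ref{coprime}(i) shows that $p$ divides exactly $\alpha_1$ and $\alpha_2$ and not $\alpha_3$. So $p\mid ax+1$ and $p\mid bx+1$, hence $p\mid(b-a)=r$, contradicting Lemma \ref{coprime}(i). Likewise, for $Q+P$ and $Q+P+C$, an odd prime $p$ dividing $\ell_2=L_{k+2}$ to an odd power appears in the first two coefficients ($r\ell_1\ell_2$, $rf\ell_2$, and in the $Q+P+C$ row $ru\ell_2(c-a)$, $r\ell\ell_2(c-b)$). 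This gives $p\mid ax+1$ and $p\mid bx+1$, so $p\mid r$, contradicting Lemma \ref{coprime}(ii). Cohn's theorem, namely that $L_{k+2}$ is not a square, guarantees that such a $p$ exists.

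The point to check carefully is the first step: that for integral $x$ the squarefree parts are genuinely forced by the table. The square-class representatives from descent are defined only up to rational squares, so a prime $p$ dividing a coefficient to an odd power must divide the integer $ax+1$ to an odd power, and in particular divide it. This holds because the squarefree part of an integer is well defined, and it is valid as long as $ax+1\neq0$. That case is excluded since $x=-1/a$ is not an integer for $a>1$. The main remaining obstacle is confirming the sign patterns in the four $A$/$B$ rows. If a row instead has an admissible sign pattern such as $(+,-,-)$, I would fall back on the same gcd argument using $p\mid v$ or $p\mid \ell_2$ in the two coefficients that share it.
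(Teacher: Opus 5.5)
Your handling of the classes $Q$, $Q+C$, $Q+P$, $Q+P+C$ is the paper's argument. You choose an odd prime of odd exponent in $F_{k+2}$ or $L_{k+2}$ up front, via Cohn's theorem and \eqref{2adic}, rather than deducing that all such exponents are even; that makes no difference.

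The gap is in the four classes $Q+A$, $Q+B$, $Q+P+A$, $Q+P+B$. There your main tool is a real sign argument, and no such argument can work. The coefficient triples for these classes are the square classes of $(ax+1,bx+1,cx+1)$ at the rational points $Q+A,\dots,Q+P+B$ themselves. Since $P$ and $Q$ lie on the unbounded real component and $A,B$ lie on the bounded component $-1/a\le x\le -1/b$, these four points lie on the bounded component. So the sign pattern is necessarily $(+,-,-)$, not $(-,+,-)$ or $(-,-,+)$ as you expect. That pattern is realized by every real $x\in(-1/a,-1/b)$. In particular, the conditions you call incompatible, $x>-1/a$ and $x<-1/c$, are compatible over $\mathbb{R}$. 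More generally, every real point of $E_k$ has pattern $(+,+,+)$ or $(+,-,-)$, so the real place can never rule out a class that contains a rational point. The missing idea is integrality, which is the paper's one-line argument. Since $a,b,c>1$, for $x\in\Z$ the three factors are all positive when $x\ge0$ and all negative when $x\le-1$. Hence no integer point can have exactly two negative factors; equivalently, $(-1/a,-1/b)\subset(-1,0)$ contains no integer.

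Your fallback would in fact succeed, but you have neither the rows (they are not in \eqref{table}) nor a verification. The row of $R+T$ is the row of $R$ multiplied by the $E_k'$-descent image of $T$. That image is $(bc\,r(c-a),-cr,-b(c-a))$ for $T=A$ and $(cr,-ac\,r(c-b),-a(c-b))$ for $T=B$. Modulo squares this gives:
\begin{itemize}
\item $(\ell_1\ell_2(c-a),-\ell\ell_2,-\ell\ell_1(c-a))$ for $Q+A$;
\item $(u\ell_2,-f\ell_2(c-b),-fu(c-b))$ for $Q+B$;
\item $(uv(c-a),-fv,-fu(c-a))$ for $Q+P+A$;
\item $(\ell_1v,-\ell v(c-b),-\ell\ell_1(c-b))$ for $Q+P+B$.
\end{itemize}
So an odd prime of odd exponent in $\ell_2$, via Lemma \ref{coprime}(ii), disposes of the first two classes. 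An odd prime of odd exponent in $v$, via Lemma \ref{coprime}(i), disposes of the last two. As written, though, the $A$/$B$ part of your proof rests on an incorrect sign expectation plus an unchecked hope. You need either the integrality remark or this computation to close it.
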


\begin{proof}
For $T=A$ or $B$, exactly two coefficients in the corresponding
descent system are negative, hence there can be no integral solution. 
Indeed, since $a,b,c>1$, for integral $x$, all three
$ax + 1$, $bx + 1$, $cx + 1$ are positive
when $x \geq 0$ and all are negative
when $x \leq -1$. 

Consider first the class $Q$.  If an odd prime $p$ occurs to an odd
exponent in $v=F_{k+2}$, then Lemma \ref{coprime}(i) and the first two
equations in the first row of \eqref{table} imply
\[
 p\mid ax+1,\qquad p\mid bx+1.
\]
Hence
\[
 p\mid b(ax+1)-a(bx+1)=b-a=r,
\]
contrary to Lemma \ref{coprime}(i).  Therefore every odd prime occurs
to an even exponent in $F_{k+2}$, and so $F_{k+2}$ is a square or
twice a square, which is impossible.  The same argument applies to
$Q+C$, using the third row of \eqref{table}.

For $Q+P$ and $Q+P+C$, use instead an odd prime occurring to an odd
exponent in $\ell_2=L_{k+2}$ and Lemma \ref{coprime}(ii).  
We conclude that every odd prime occurs to an
even exponent in $L_{k+2}$. Since $k + 2$ is odd, formula (\ref{2adic}) shows that 
$v_2(L_{k+2})$ is
also even. Thus $L_{k+2}$ is a square, contradicting Cohn's theorem because $k + 2 \geq 5$.
\end{proof}

We can now prove the final part of the main theorem.

\begin{proof}[Proof of Theorem \ref{main}(iii)]
By Lemma \ref{nointegerQ}, every integral point must belong to one of
the eight old classes.  As noted above, the elimination in
\cite[Theorem 4]{DujellaMT}, together with its Fibonacci specialization
in \cite[Theorem 6]{DujellaMT}, eliminates all of these except
$P+2E_k'(\Q)$.  By the standard $2$-descent criterion
\cite[Proposition 1]{DujellaMT}, an integral point $(x,y)$ in this
class satisfies
\[
 F_{2k}x+1=\sq,\qquad
 F_{2k+2}x+1=\sq,\qquad
 F_{2k+4}x+1=\sq.
\]
Since $c=F_{2k+4}>1$, an integer satisfying these three square
conditions cannot be negative.  The Hoggatt-Bergum theorem
\cite{DujellaHB}, together with the trivial solution $x=0$, therefore
gives
\[
 x=0,\qquad
 x=4F_{2k+1}F_{2k+2}F_{2k+3}.
\]
Substitution gives precisely \eqref{knownpoints} and \eqref{HBpoint}.
\end{proof}

\section{The generic ranks of the two parity families}\label{generic}

The identity
\[
 L_k^2-5F_k^2=4(-1)^k
\]
suggests separating the odd and even indices already at the level of
one-parameter families.  We now show that the resulting generic ranks
are exactly $2$ and $1$, respectively.

Let $T$ be an indeterminate.  A parametrization of
\[
 \ell^2-5f^2=-4
\]
is
\begin{equation}\label{oddparam}
 f=\frac{T^2-2T+5}{T^2-5},\qquad
 \ell=-\frac{T^2-10T+5}{T^2-5},
\end{equation}
while a parametrization of
\[
 \ell^2-5f^2=4
\]
is
\begin{equation}\label{evenparam}
 f=-\frac{4T}{T^2-5},\qquad
 \ell=-\frac{2(T^2+5)}{T^2-5}.
\end{equation}
In both cases put
\[
 u=\frac{\ell+f}{2},\qquad
 v=\frac{\ell+3f}{2},\qquad
 \ell_1=\frac{\ell+5f}{2},\qquad
 \ell_2=\frac{3\ell+5f}{2},
\]
and
\[
 a=f\ell,\qquad b=u\ell_1,\qquad c=v\ell_2.
\]
We denote by $\mathcal E_-$ and $\mathcal E_+$ the curves
\[
 y^2=(ax+1)(bx+1)(cx+1)
\]
over $\Q(T)$ obtained from \eqref{oddparam} and \eqref{evenparam},
respectively.

For the odd family, the relation $\ell^2-5f^2=-4$, with
$\ell=2u-f$, gives
\[
 u^2-fu-f^2=-1.
\]
Hence the calculation in Proposition \ref{Qpoint} is valid identically
over $\Q(T)$, with $F_{k-1}$ replaced by $u-f$ and
$F_{2k+1}$ by $f^2+u^2$.  Thus $\mathcal E_-$ has the two rational
sections
\[
 \mathcal P=(0,1),\qquad
 \mathcal Q=
 \left(-\frac{u-f}{\ell uv},
       \frac{f^2+u^2}{\ell uv}\right).
\]
The even family has the rational section $\mathcal P=(0,1)$.

We use the injective specialization criterion of Gusi\'c and Tadi\'c
\cite{GusicTadic}.  In the form needed here, let 
\[
 \mathcal E:\quad V^2=(U-e_1)(U-e_2)(U-e_3),
 \qquad e_i\in\Z[T]
\]
be a nonconstant elliptic curve over $\Q(T)$. 
Suppose that the specialization at 
$T=T_0\in\Q$ is nonsingular and that every nonconstant square-free divisor in $\Z[T]$ of each of
\[
 (e_1-e_2)(e_1-e_3),\qquad
 (e_2-e_1)(e_2-e_3),\qquad
 (e_3-e_1)(e_3-e_2)
\]
takes a non-square value in $\Q$. Then the specialization homomorphism at $T_0$ is
injective.

\begin{theorem}\label{genericrank}
The generic Mordell-Weil ranks of the two families are
\[
 \operatorname{rank}\mathcal E_-(\Q(T))=2,\qquad
 \operatorname{rank}\mathcal E_+(\Q(T))=1.
\]
\end{theorem}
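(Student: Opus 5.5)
The plan is to prove each equality as a lower bound plus an upper bound. The lower bound comes from independence of the known sections. The upper bound comes from a specialization that the Gusi\'c--Tadi\'c criterion shows is injective, at a parameter $T_0$ where the specialized curve has known small rank.

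\emph{Lower bounds.} For $\mathcal E_+$ the section $\mathcal P=(0,1)$ is of infinite order. If it were torsion, its specialization would be torsion at every good fibre. But for a suitable $T_0$ the specialization is (up to a twist/scaling) one of the curves $E_k$ with $k$ even, where $(0,1)$ has infinite order by \cite{DujellaMT}. For $\mathcal E_-$ the same argument applies to $\mathcal P$ and $\mathcal Q$, via Proposition \ref{independent}. Choose $T_0$ so that $(f,\ell)$ specializes to $(F_k,L_k)$ for some odd $k\ge3$; this is possible because \eqref{oddparam} is a rational parametrization of the conic through the point $T=\infty$, $(f,\ell)=(1,-1)$, so every rational point arises. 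If $m\mathcal P+n\mathcal Q$ were torsion over $\Q(T)$, then $mP_k+nQ_k$ would be torsion in $E_k(\Q)$, forcing $m=n=0$. Hence $\operatorname{rank}\mathcal E_-\ge2$ and $\operatorname{rank}\mathcal E_+\ge1$. An alternative is simply to specialize at any convenient $T_0$ and check independence numerically, via the height pairing or the $2$-descent map.

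\emph{Upper bounds.} First I clear denominators. Multiplying through by $(T^2-5)^2$ and using the transformation $X=abcx$ as in \eqref{E0}, each family becomes
\[
V^2=(U+bc)(U+ac)(U+ab)
\]
with $e_i\in\Z[T]$, after scaling $a,b,c$ by $(T^2-5)^2$, which does not change the $\Q(T)$-isomorphism class. Next I factor the three products $(e_i-e_j)(e_i-e_k)$ over $\Z[T]$. Up to constants, $e_1-e_2=c(b-a)$, $e_1-e_3=b(c-a)$, and so on, so the relevant factors are the linear and quadratic polynomials $f,\ell,u,v,\ell_1,\ell_2$, $b-a$, $c-a$, $c-b$ in $T$, together with constants. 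I then list all nonconstant square-free divisors appearing in each product. Then I search for a small integer $T_0$ with nonsingular fibre such that no such divisor takes a square value at $T_0$; it may help to include the constant factors, by taking $T_0$ so that the values avoid squares even up to the sign of the constant. By the criterion, specialization at $T_0$ is injective, so the generic rank is at most $\operatorname{rank}\mathcal E_{\pm,T_0}(\Q)$. Finally I compute that rank with PARI/GP or mwrank, aiming for a fibre of rank exactly $2$ for $\mathcal E_-$ and exactly $1$ for $\mathcal E_+$. Using a $2$-descent that proves an upper bound on the Selmer rank makes this unconditional.

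\emph{Main obstacle.} The hard part is finding a single $T_0$ that simultaneously satisfies the non-square condition for all divisors and gives a specialization of minimal rank with a provable upper bound. For the odd family the expected heuristic distribution ($1/4$ of fibres having rank $3$) means some trial and error. I would first try small values $T_0=0,\pm1,\pm2,\dots$, and if needed rationals with small height. For each candidate I would compute the rank via $2$-descent (full $2$-torsion makes complete $2$-descent available) rather than relying on analytic rank. I expect the bookkeeping of square-free divisors, including sign and constant factors, to be tedious but routine once the factorizations are written down.
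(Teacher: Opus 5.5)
Your proposal is correct and follows essentially the paper's route. The lower bounds come from specializing $\mathcal P,\mathcal Q$ to Fibonacci fibres and invoking Proposition~\ref{independent} and \cite{DujellaMT}, and the upper bounds come from the Gusi\'c--Tadi\'c criterion applied to an integral model whose root differences are, up to sign, $d^4a(c-b)$, $d^4b(c-a)$, $d^4c(b-a)$ with $d=T^2-5$, exactly as in your translated model. The paper handles your ``main obstacle'' by taking $T_0$ at a Fibonacci fibre itself, namely $T_0=85/38=(L_{17}-1)/(F_{17}-1)$ giving $E_{17}$ and $T_0=20/9$ giving $E_{12}$, whose ranks $2$ and $1$ are already recorded in \cite{DujellaMT}; so a single fibre supplies both bounds and no new rank computation is needed.
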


\begin{proof}
We first consider $\mathcal E_-$.  Put $d=T^2-5$.  Starting from
$\mathcal E_\pm$, make the changes
\[
 X=abc\,x,\qquad Y=abc\,y,\qquad Z=X+ab.
\]
Then
\[
 Y^2=Z(Z+M)(Z+N),\qquad
 M=a(c-b),\quad N=b(c-a).
\]
Since $M$ and $N$ have denominators dividing $d^4$, the further change
\[
 U=d^4Z,\qquad V=d^6Y
\]
gives an integral model
\[
 V^2=U(U+m_\pm)(U+n_\pm),\qquad
 m_\pm=d^4M,\quad n_\pm=d^4N.
\]
Here $e_1=0$, $e_2=-m_\pm$ and $e_3=-n_\pm$ belong to
$\Z[T]$, as required by the specialization criterion.  Moreover,
\[
 j=256\,\frac{(m_\pm^2-m_\pm n_\pm+n_\pm^2)^3}
 {m_\pm^2n_\pm^2(m_\pm-n_\pm)^2}.
\]
The formulas below show directly that this is nonconstant (for example,
$\deg m_-=8$, $\deg n_-=7$, whereas $\deg m_+=7$, $\deg n_+=8$).

For the odd family, direct simplification gives
\begin{align*}
m_-={}&-(T^2-10T+5)(T^2-2T+5)\\
&\quad{}\times(T^4+4T^3+30T^2+20T+25),\\
n_-={}&16T(T^2+5)(T^4+30T^2+25),
\end{align*}
and
\begin{align*}
n_--m_-={}&(T^2+2T+5)(T^2+10T+5)\\
&\quad{}\times(T^4-4T^3+30T^2-20T+25).
\end{align*}

The displayed factors are irreducible over $\Q$.
We take
\[
 T_0=\frac{85}{38}.
\]
This is the Fibonacci specialization corresponding to $k=17$, since
\[
 \frac{L_{17}-1}{F_{17}-1}=\frac{85}{38}.
\]
The absolute square classes in $\Q^*/\Q^{*2}$ of the three irreducible
factor values of $m_-$ are
\[
 5\cdot3571,\qquad 5\cdot1597,\qquad 73\cdot149\cdot2221,
\]
those of the three factor values of $n_-$ are
\[
 2\cdot5\cdot17\cdot19,\qquad
 3\cdot5\cdot107,\qquad
 7\cdot23\cdot103681,
\]
and those of the three factor values of $n_--m_-$ are
\[
 5\cdot37\cdot113,\qquad
 5\cdot9349,\qquad
 5\cdot13\cdot141961.
\]
In each union of the
irreducible-factor lists occurring in 
$m_-n_-$, $m_-(n_--m_-)$, and $n_-(n_--m_-)$, 
every nonempty product has a nontrivial absolute square class.  Since
$n_-$ has content $16$, the first and third products in condition (A)
also admit the constant square-free factor $2$; adjoining the square
class of $2$ still leaves all nonempty products nontrivial.  Hence, for
either choice of sign, every nonconstant square-free divisor in
condition (A) of \cite{GusicTadic} specializes to a nonsquare in $\Q$.
Therefore the specialization criterion applies and specialization
at $T=85/38$ is injective.

At this value, \eqref{oddparam} gives
\[
 (f,\ell)=(1597,3571)=(F_{17},L_{17}),
\]
so the specialization of $\mathcal E_-$ is $E_{17}$; the integral model
above specializes to a $\Q$-isomorphic curve.  It is known that
\[
 \operatorname{rank}E_{17}(\Q)=2
\]
(see \cite[Section 5]{DujellaMT}).  On the other hand, the sections
$\mathcal P$ and $\mathcal Q$ specialize at $k=17$ to the independent
points $P_{17}$ and $Q_{17}$ by Proposition \ref{independent}.  Thus
\[
 2\leq\operatorname{rank}\mathcal E_-(\Q(T))
 \leq\operatorname{rank}E_{17}(\Q)=2.
\]

For the even family the same construction gives
\[
 V^2=U(U+m_+)(U+n_+),
\]
where
\begin{align*}
m_+={}&16T(T^2+5)
 (T^4+8T^3+30T^2+40T+25),\\
n_+={}&(T^2+2T+5)(T^2+10T+5)\\
&\quad{}\times(3T^4+20T^3+90T^2+100T+75),
\end{align*}
and
\begin{align*}
n_+-m_+={}&(T+1)(T+5)(3T^2+10T+15)\\
&\quad{}\times(T^4+4T^3+30T^2+20T+25).
\end{align*}
Again the displayed factorizations are into irreducible polynomials over
$\Q$.  We take $T_0=20/9$.  The square classes of the irreducible factor
values are
\[
\begin{array}{c|ccc}
m_+&
5&5\cdot7\cdot23&17\cdot53\cdot109\\[1mm]
n_+&
5\cdot233&5\cdot521&3\cdot90481
\end{array}
\]
and
\[
\begin{array}{c|cccc}
n_+-m_+&
29&5\cdot13&3\cdot5\cdot281&3001.
\end{array}
\]
Again, every nonempty product in each of the three relevant unions
has a nontrivial absolute square class.  Since $m_+$ has content $16$,
the first and second products in condition (A) also admit the constant
square-free factor $2$; including it creates no square relation.
Therefore the same is true after either choice of sign, and condition
(A) of \cite{GusicTadic} holds.  Hence specialization is injective.

Formula \eqref{evenparam} at $T=20/9$ gives
\[
 (f,\ell)=(144,322)=(F_{12},L_{12}),
\]
so the specialization of $\mathcal E_+$ is $E_{12}$, and the integral
model specializes to a $\Q$-isomorphic curve.  The computation
\[
 \operatorname{rank}E_{12}(\Q)=1
\]
is recorded in \cite[Section 5]{DujellaMT}.  The section $\mathcal P$ is non-torsion.  Indeed, its specialization
$P_{12}$ is non-torsion by \cite[Theorem 3 and Remark 4]{DujellaMT}.
We conclude
\[
 1\leq\operatorname{rank}\mathcal E_+(\Q(T))
 \leq\operatorname{rank}E_{12}(\Q)=1.
\]
\end{proof}

\section{Rank distribution and a heuristic}

The additional point $Q_k$ gives a simple explanation for a phenomenon
observed computationally in \cite[Remark 4.9.5]{DujellaBook}.
The even and odd subfamilies should be treated separately.

For even $k$, the family has the standard forced point $P_k$, so the
minimal possible rank is $1$.  For odd $k\geq3$, Theorem
\ref{main}(ii) gives two independent forced points, so the minimal
possible rank is $2$.

Computations with Magma for $1\leq k\leq100$ gave
\[
 \#\{k:W(E_k)=+1\}=52,\qquad
 \#\{k:W(E_k)=-1\}=48.
\]
More strikingly, the same split occurs in each parity class:
among the $50$ odd values and among the $50$ even values there are
$26$ curves with root number $+1$ and $24$ with root number $-1$.

Using complete factorizations of the relevant Fibonacci and Lucas numbers, 
we extended the root-number computation to $1\le k\le 709$. The results are
$$
\begin{array}{c|cc}
 & W(E_k)=+1 & W(E_k)=-1\\ \hline
 k\ {\rm even} & 162 & 192\\
 k\ {\rm odd}  & 193 & 162
\end{array}
$$
and hence, in the full range,
$$
\#\{k:W(E_k)=+1\}=355,\qquad
\#\{k:W(E_k)=-1\}=354.
$$
Thus the root numbers are almost perfectly balanced overall, although the
two parity subfamilies exhibit noticeable imbalances in opposite directions.
This is consistent with the expected asymptotic equidistribution, while
suggesting that convergence within the two parity subfamilies may be rather slow.

Combining the parity conjecture with the usual minimalist
philosophy that the rank is normally the smallest one compatible with
the forced points and the root number, one expects
\[
\begin{array}{c|cc}
 & W(E_k)=-1 & W(E_k)=+1\\ \hline
 k\ {\rm even}&1&2\\
 k\ {\rm odd}&3&2.
\end{array}
\]
If the root numbers are asymptotically equidistributed in each parity
subfamily, this leads to the following heuristic.

\begin{conjecture}\label{rankheuristic}
As $k\to\infty$, the curves $E_k$ have ranks $1,2,3$ with densities
\[
 \frac14,\qquad\frac12,\qquad\frac14,
\]
respectively, while the set of $k$ for which
$\operatorname{rank}E_k(\Q)\geq4$ has density $0$.
If, in addition, the contribution of these higher-rank specializations
to the average is negligible, then the average rank should tend to $2$.
\end{conjecture}

The computations for $k\leq100$ are remarkably consistent with this
prediction.  The PARI/GP calculations give
\[
\begin{array}{c|rrrrrrr}
\text{rank interval}&[1,1]&[2,2]&[3,3]&[1,3]&[2,4]&[3,5]&[2,6]\\
\hline
\text{number}&20&45&21&5&6&2&1.
\end{array}
\]
If the actual rank equals the
lower endpoint of every unresolved
interval, the resulting counts are
\[
 25,\qquad52,\qquad23
\]
for ranks $1,2,3$, respectively.  These are exactly the counts predicted
from the computed root numbers and the two forced-rank baselines, with
the exceptional value $k=1$ taken into account.

For completeness, the $2$-Selmer ranks
\[
 \dim_{\mathbf F_2}\operatorname{Sel}_2(E_k)-2
\]
for $1\leq k\leq100$ were distributed as follows:
\[
\begin{array}{c|rrrrrrr}
r_2&1&2&3&4&5&6&7\\ \hline
\#&11&31&33&20&3&1&1.
\end{array}
\]
These data show that the family also has interesting $2$-Selmer
phenomena, but the abundance of odd-$k$ curves expected to have rank
$3$ is already naturally explained by the point $Q_k$, together with
the parity heuristic.

\begin{remark}
Theorem \ref{genericrank} gives a geometric counterpart to the
minimal-rank heuristic: the odd and even Fibonacci specializations lie
on one-parameter families of generic ranks $2$ and $1$, respectively. 
Beyond the parity conjecture, the remaining
heuristic inputs in Conjecture \ref{rankheuristic} are the
expected equidistribution of root numbers, together with the usual
expectation that rank jumps by at least two have density zero.
\end{remark}

\section*{Acknowledgment}

The author acknowledges support from
the Croatian Science Foundation under the
project no. IP-2022-10-5008 (TEBAG),
the project ``Implementation of cutting-edge research and its application as part of the
Scientific Center of Excellence for Quantum and Complex Systems, and Representations of Lie Algebras'', Grant No. PK.1.1.10.0004, co-financed by the European
Union through the European Regional Development Fund -- Competitiveness and
Cohesion Programme 2021--2027,
and the European Union:
NextGenerationEU through the National Recovery and Resilience Plan 2021--2026, Institutional grant of University of Zagreb Faculty of Science (IK IA 1.1.3. Impact4Math).

The additional point $Q_k$ was found in 2026 during an exploratory
investigation by the author, based on PARI/GP and Magma computations,
with assistance from ChatGPT (OpenAI, GPT-5.6 Sol).  All identities,
descent arguments, and computational conclusions used in this paper
were subsequently checked independently.

The author would like to thank Ana Jurasi\'c and Matija Kazalicki for their useful comments and suggestions.


\begin{thebibliography}{99}

\bibitem{Cohn}
J.~H.~E. Cohn,
Lucas and Fibonacci numbers and some Diophantine equations,
\emph{Proc. Glasgow Math. Assoc.} \textbf{7} (1965), 24--28.

\bibitem{DujellaHB}
A. Dujella,
A proof of the Hoggatt--Bergum conjecture,
\emph{Proc. Amer. Math. Soc.} \textbf{127} (1999), 1999--2005.

\bibitem{DujellaParam}
A. Dujella,
A parametric family of elliptic curves,
\emph{Acta Arith.} \textbf{94} (2000), 87--101.

\bibitem{DujellaMT}
A. Dujella,
Diophantine $m$-tuples and elliptic curves,
\emph{J. Th\'eor. Nombres Bordeaux} \textbf{13} (2001), 111--124.

\bibitem{DujellaBook}
A. Dujella,
\emph{Diophantine $m$-tuples and Elliptic Curves},
Developments in Mathematics, vol.~79, Springer, Cham, 2024.

\bibitem{DujellaPetho}
A. Dujella and A. Peth\H{o},
Integer points on a family of elliptic curves,
\emph{Publ. Math. Debrecen} \textbf{56} (2000), 321--335.

\bibitem{FL}
F. Luca and Y. Fujita, 
On Diophantine quadruples of Fibonacci numbers, 
\emph{Glas. Mat. Ser. III} \textbf{52} (2017), 221--234.

\bibitem{FL2}
F. Luca and Y. Fujita, 
There are no Diophantine quadruples of Fibonacci numbers, 
\emph{Acta Arith.} \textbf{185} (2018), 19--38.

\bibitem{GusicTadic}
I. Gusi\'c and P. Tadi\'c,
Injectivity of the specialization homomorphism of elliptic curves,
\emph{J. Number Theory} \textbf{148} (2015), 137--152.

\bibitem{HLT}
B. He, F. Luca and A. Togb\'e, 
Diophantine triples of Fibonacci numbers,
\emph{Acta Arith.} \textbf{175} (2016), 57--70. 

\bibitem{HoggattBergum}
V.~E. Hoggatt, Jr. and G.~E. Bergum,
A problem of Fermat and the Fibonacci sequence,
\emph{Fibonacci Quart.} \textbf{15} (1977), 323--330.

\end{thebibliography}
\end{document}